\begin{document}
	\newtheorem{fact}{Fact}
\newtheorem{thm}{Theorem}
\newtheorem{conv}{Convention}
\newtheorem{cor}{Corollary}
\newtheorem{lem}{Lemma}
\newtheorem{slem}{Sublemma}
\newtheorem{prop}{Proposition}
\newtheorem{defn}{Definition}
\newtheorem{conj}{Conjecture}
\newtheorem{exple}{Exemple}
\newtheorem{ass}{Assumption}
\newtheorem{ques}{Question}
\title{Minimal surfaces in $\mathbb{R}^4$ and their links at infinity}
\author{Marc Soret and Marina Ville}
\date{ }
\maketitle
\begin{abstract}
We look at complete embedded/immersed minimal surfaces of finite total curvature in $\mathbb{R}^4$. Similarly to the case of complex curves in $\mathbb{C}^2$ we introduce their link at infinity and we define a writhe number at infinity which gives us a formula for the total normal curvature of the surface. The knowledge of the link at infinity can sometimes  help us determine if a surface has self-intersection and we illustrate this idea by looking surfaces of small total curvature. In particular we classify embedded minimal surfaces of total curvature $-4\pi$.
\end{abstract}
\section{Introduction - Sketch of the paper}
\let\thefootnote\relax\footnote{MSC 2020 - Mathematical Sciences Classification System 53C42, 57K10}
We consider here complete embedded/immersed minimal surfaces of finite total curvature in $\mathbb{R}^4$. In a seminal paper, Chern and Osserman ([C-O]) showed that the Gauss maps giving the data of their oriented tangent planes can be seen as a pair of two meromorphic functions. \\
%In the 1980's Ossermann and other authors wrote a series of %papers (see for example [H-O1], [H-O2] and [M-O]) pursuing the %investigation of the Gauss map.
 In $\mathbb{R}^3$, a complete surface with a constant Gauss map is a plane. In $\mathbb{R}^4$, if a complete surface has both Gauss maps constant, it is a plane; if one of them is constant, it is a complex curves w.r.t. to some orthogonal complex structure in $\mathbb{R}^4$.\\
Much research has been done about the Gauss maps, using tools of complex analysis as it gives us good information about the minimal surface. 
However it cannot really help us determined when an immersed minimal surface is actually embedded and we  address this problem here.\\
\\
We start by recalling various definitions of the Gauss maps and explain how to go from one to the other.\\
From then on, we assume that our minimal surface $S$  is properly immersed and of finite total curvature. If $S$ is embedded near infinity, we define its {\it link at infinity}, which is the intersection of the surface with a sphere of very large radius in $\mathbb{R}^4$. We give a formula relating this link to the total normal curvature of the surface and derive some restrictions on the asymptotic behaviour of the surface. The   knot type of a generic end is a torus knot and we investigate the generic non torus knot possiblities. \\
Finally we look at minimal surfaces of small total curvature. If the curvature is $-4\pi$, we  classify all complete embedded non holomorphic ones. We get some partial information for curvatures $-6\pi$ and $-8\pi$. This being said, we are left with the basic
\begin{ques}
	What are the minimal embeddings of $\mathbb{C}$ into $\mathbb{R}^4$? Does such an embedding exist for every total curvature $-2n\pi$, with $n$ a positive integer? Or for every total curvature $-4n\pi$?
	\end{ques}	
{\bf Acknowledgments} We thank Mario Micallef for directing us to [H-O1] and Hojoo Lee for noticing the presence in this paper of the Enneper surface and the catenoid.
\section{Classical formalism}
\subsection{Definition - Local parametrization}\label{local expression for a minimal surface}
A minimal map $F$ from a surface to $\mathbb{R}^4$ is conformal and harmonic.  If we identify $\mathbb{R}^4$ with 
$\mathbb{C}^2$, we can locally parametrize $F$ by maps 
\begin{equation}\label{condition harmonique}
z\in\mathbb{D}\mapsto (e(z)+\overline{f}(z),g(z)+\overline{h}(z))
\end{equation}
where $\mathbb{D}$ is the unit disk of $\mathbb{C}$ and $e,f,g,h$ are holomorphic functions verifying
\begin{equation}\label{condition conforme}
e'f'+g'h'=0
\end{equation}
Condition (\ref{condition conforme}) ensures that $F$ is conformal. The representation of $F$ by $e,f,g,h$ can be seen as a 
 $4$-dimensional analogue of the Weierstrass representation in $\mathbb{R}^3$.
\subsection{The Gauss maps}
If $\Sigma$ is an oriented surface in $\mathbb{R}^4$, there are two Gauss maps $\gamma_{\pm}:\Sigma\longrightarrow\mathbb{R}^4$.  We recall three equivalent definitions for $\gamma_+$ (the case of $\gamma_-$ is similar) as each definition has different advantages.\\
Let $p\in\Sigma$ and let $(u,v)$ a positive orthonormal basis of  the tangent plane $T_p\Sigma$.
\subsubsection{The Eells-Salamon approach ([E-S])}\label{eells-salamon subsection}
Eells-Salamon identify the set of complex structures on $\mathbb{R}^4$ which preserve the metric and preserve (resp. reverse) the orientation of $\mathbb{R}^4$ with the $2$-sphere $Z_+$ (resp. $Z_-$) of unit $2$-vectors $\alpha$ with $\star \alpha=\alpha$ (resp. $\star \alpha=-\alpha$); here $\star:\Lambda^2(\mathbb{R}^4)\longrightarrow \Lambda^2(\mathbb{R}^4)$ is the Hodge operator.  We set $\gamma_\pm(p)$ to be the unique such complex structure $J$ such that $T_p\Sigma$ is an oriented complex $J$-line. In other words, $J(u)=v$.\\
Note that this definition does not depend on the choice of a basis of $\mathbb{R}^4$, which enabled [E-S] to extend it to a general Riemannian manifold and create the twistor theory of minimal surfaces.\\
\\ By contrast, the next two definitions depend on the choice of a positive orthonormal basis $(e_1,e_2,e_3,e_4)$ of $\mathbb{R}^4$.
\subsubsection{The quaternions approach ([A-S-V])}\label{definition par les quaternions}
We identify $\mathbb{R}^4$ with the quaternions $\mathbb{H}$
by setting
\[x_1e_1+x_2e_2+x_3e_3+x_4e_4\mapsto x_1+x_2i+x_3j+x_4k\] and we define 
\begin{equation}
\label{quaternions}
\gamma_+(p)=v\cdot u^{-1}
\end{equation}
It is a unit imaginary quaternion so it is of the form  $\gamma_+(p)=ai+bj+ck$ and we identify it with the complex structure $J$ in $Z_+$ such that $Je_1=ae_2+be_3+ce_4$. 
\subsubsection{The classical approach ([C-O], [M-O], [H-O1], [H-O2] etc)}\label{classique} 
We complexify $\mathbb{R}^4$, write $u-iv=(w_1,w_2,w_3,w_4)\in 
\mathbb{R}^4\otimes\mathbb{C}$ and set
\begin{equation}
\label{la forme de osserman et al}
\gamma_{+}(p)=\frac{w_3+iw_4}{w_1-iw_2}\in\mathbb{C}P^1
\end{equation}
We can see (\ref{la forme de osserman et al}) as a stereographic projection of (\ref{quaternions}):
\begin{lem}\label{lemme pour l'identification}
	If $u\cdot v^{-1}=ai+bj+ck$ (\S \ref{definition par les quaternions}), the $w_i$'s defined in \S \ref{classique} verify
	\begin{equation}\label{egalite entre les deux gauss maps1}
	\frac{w_3+iw_4}{-w_1+iw_2}=-i\frac{b+ic}{1-a}
	\end{equation}
\end{lem}
\begin{proof}
	Fix $J$ by setting $J(e_1)=ae_2+be_3+ce_4$. A computation shows that the $(w_1,w_2,w_3,w_4)$'s given by $e_k-iJe_k$ for $i=1,2,3,4$ all verify (\ref{egalite entre les deux gauss maps1}). Hence (\ref{egalite entre les deux gauss maps1}) is true for $u-iJu=u-iv$.
	\end{proof}
If we put together these various approaches, we derive
\begin{exple}
If $e,f$ are holomorphic functions and $\lambda\in\mathbb{C}$, then
	\begin{equation}
	(e+\overline{f},\lambda f-\frac{\overline{e}}{\lambda})
	\end{equation}
	is minimal and holomorphic w.r.t. some orientation preserving orthogonal complex structure on $\mathbb{R}^4$.
	\end{exple}

\subsection{Complete minimal surfaces of finite total curvature}\label{generalite sur les courbures}
From now on we assume:
\begin{ass}\label{definition du pmftc}
   $\Sigma$ is an oriented complete minimal surface of finite total curvature  properly immersed in $\mathbb{R}^4$ (PMFTC).
\end{ass} We recall some classical properties ([C-O], [J-M] etc).
\begin{itemize}
	\item 
There exists a compact Riemann surface $\hat{\Sigma}$ without boundary and a finite number of points $p_1,...,p_d$ in 
$\hat{\Sigma}$ such that 
$\Sigma=\hat{\Sigma}\backslash\{p_1,...,p_d\}$
and the Gauss maps $\gamma_{\pm}$ extends to holomorphic maps $\hat{\gamma}_\pm:\hat{\Sigma}\longrightarrow \mathbb{C}P^1.$ We denote by $d_+$ (resp. $d_-$) the degree of $\hat{\gamma}_+$ (resp. $\hat{\gamma}_-$). If $K^T$ and $K^N$ are the curvatures of the tangent and normal bundles of $\Sigma$
\begin{equation}\label{equation:the two curvatures}
-\int_{\Sigma}K^T=2\pi (d_++d_-)\ \ \ \ \ -\int_{\Sigma}K^N=2\pi (d_+-d_-)
\end{equation}
REMARK. Depending on the conventions, the Gauss maps are holomorphic or antiholomorphic. Here we take them to be holomorphic so $d_\pm\geq 0$. If one of the $d_\pm$ is zero, the surface is complex for some orthogonal complex structure on $\mathbb{R}^4$.
\item 
For $R$ large enough, $\Sigma\cap(\mathbb{R}^4\backslash \mathbb{S}(0,R))$ is a finite union of annuli, called  {\it ends} of $\Sigma$, each end corresponding to a $p_k$.\\
 Each end $E$ has a parametrization as  
\[\{z/|z|>R\}\longrightarrow\mathbb{R}^4=\mathbb{C}\times\mathbb{C}\]
\begin{equation}\label{parametrisation d'un bout par un anneau}
z\mapsto(z^N+ o(|z^N|), o(|z^N|))
\end{equation}
The plane defined by the first complex coordinate in (\ref{parametrisation d'un bout par un anneau}) is the {\it tangent plane at infinity} $T_E\Sigma$ of the end; its orthogonal complement $N_E\Sigma$ is the {\it normal plane at infinity}.
The quantity $N-1$ is called the {\it branching order at infinity} for the end and $N$ is the {\it order of the end}.

\end{itemize}

\section{The link/knot/braid at infinity}
In this section we assume that $\Sigma$ is a PMFTC (see Assumption \ref{definition du pmftc}) embedded outside a ball in $\mathbb{R}^4$. We define a link at infinity similarly to the case of complex curves (cf. [N-R]).
\subsection{Construction}
\subsubsection{Reminder: braids}
A {\it closed braid} in $\mathbb{R}^3$ with oriented axis  $Oz$ is a loop $\gamma(t)$ whose cylindrical coordinates $(\rho(t), \theta(t), z(t))$ verify for all $t$,
\[\rho(t)\neq 0, \ \ \ \ \theta'(t)>0\]
 The number of strands of $\gamma$ is the degree of $\theta:\mathbb{S}^1\longrightarrow :\mathbb{S}^1$. The algebraic length $e(\gamma)$ is the linking number of $\gamma$ with a loop $\hat{\gamma}$ obtained by pushing $\gamma$ slightly in the direction of $Oz$.\\
 If we add a point at infinity to $\mathbb{R}^3$, we get a braid in $\mathbb{S}^3$ with a great circle for its axis.\\
 We can also write a braid with $N$ strands as an element of the braid group $B_N$ generated by the $\sigma_i$'s which exchange the $i$-th strand with the $i+1$-th strand. For a braid $\prod_{i} \sigma_{k_i}^{m(i)}$, we have
 \begin{equation}
 e(\prod_{i} \sigma_{k_i}^{m(i)})=\sum_{i}m(i)
 \end{equation}
\subsubsection{For a single end}
Consider an end $E$ of the surface $\Sigma$ parametrized as in (\ref{parametrisation d'un bout par un anneau}). Clearly, there exists an $R_0$ such that, if $R>R_0$, $\Sigma$ is transverse to $\mathbb{S}(0,R)$; thus \[K_R=\mathbb{S}(0,R)\cap\Sigma\] is a knot which does not depend on $R>R_0$, up to isotopy. We take a non zero vector $X_E$ in $N_E\Sigma$. We project $X_E$ on $\mathbb{S}(0,R)$, push $K_R$ in that direction and get another knot $\hat{K}_R$. The linking number
$lk(K_R,\hat{K}_R)$ is the {\it self-linking number} of $K_R$ w.r.t. the framing $X_E$. 
\begin{defn}
	For $R$ large enough, the {\it self-linking number} of $K_R$ w.r.t. the framing $X_E$ does not depend on the vector $X_E$ orthogonal to the plane tangent at infinity at the end $E$. It also does not depend on $R$ and 
we call it the {\bf writhe at infinity} $w_\infty(E)$ of the end $E$.
\end{defn}
\subsubsection{The braid at infinity}
If $N$ is the order at infinity of the end, $K_R$ can be seen as an $N$-braid in the $3$-sphere with axis the unit circle in $N_E\Sigma$. Equivalently, we can view it as a braid in a cylinder $C(R)$:
\[K_R=\Sigma\cap\underbrace{\{(z_1,z_2)\in\mathbb{C}^2\slash |z_1|=R\}}_{C(R)}\]
with $(z_1,z_2)$ complex coordinates in $\mathbb{R}^4\cong\mathbb{C}^2$, $z_1$ generates $T_E\Sigma$ and $z_2$ generates $N_E\Sigma$.\\
We can take $X_E$ as a generator of the axis of the braid. The writhe $w_\infty(E)$ is the algebraic length of the braid.
\subsubsection{Several ends}\label{several ends}
If $\Sigma$ has several ends, the union of the knots of the various ends is the {\it link at infinity} of $\Sigma$ which we denote $L_\infty(\Sigma)$.\\
We define a framing $X$ on $\Sigma\cap\mathbb{S}(0,R)$ by putting together the $X_{E_i}$'s corresponding to the various ends $E_i$. The {\it writhe at infinity of $\Sigma$}, denoted $w_\infty(\Sigma)$ is the self-linking number of the link $\Sigma\cap\mathbb{S}(0,R)$ w.r.t. the framing $X$.
\begin{prop} If $\Sigma$ has $k$ mutually transverse tangent planes $P_i$ at infinity, its writhe at infinity of $\Sigma$ is 
 \begin{equation}
w_\infty(\Sigma)=\sum_{i=1}^k w_\infty(E_i)+\sum_{1\leq i<j\leq k}N_iN_j\sigma(i,j)
 \end{equation}
 \begin{minipage}{0.1\textwidth}
 where
 \end{minipage}
\begin{minipage}{0.9\textwidth}
\begin{itemize}
 	\item 
 each end has a branching order $N_i-1$
 \item
 $\sigma(i,j)$ is $1$ (resp. $-1$) if $P_i$ and $P_j$ intersect positively (resp. negatively).
 \end{itemize}
\end{minipage}
 \end{prop}
\subsection{Integral formulae for the  curvatures}\label{integral formulae for the normal bundle}
The following theorem is classical.
\begin{thm}\label{thm: integrale de la courbure tangente}([Shi]) If  $\Sigma$ has $k$ ends of branching order $N_i-1$, $i=1,...,k$ (cf. (\ref{parametrisation d'un bout par un anneau})) and Euler characteristic $\chi(\Sigma)$, then
\begin{equation}
\frac{1}{2\pi}\int_{\Sigma}K^T=-\sum_{i=1}^k N_i+\chi(\Sigma)
\end{equation}
\end{thm}
\begin{thm}\label{proposition:curvature of the normal bundle}  
	If $\Sigma$ has $D_\Sigma$ transverse double points counted with sign,
		\begin{equation}\label{courbure du fibre normal;immerge}
		\frac{1}{2\pi}\int_{\Sigma}K^N=w_\infty(\Sigma) -2D_\Sigma
		\end{equation}

\end{thm}
\begin{proof} 
The vector fields $X_E$ defined above are very close to belonging to $N\Sigma$ so we define a section $X^N$ of $N\Sigma$ which is very close to $X_E$ on each end $E$.  We push $\Sigma\cap\mathbb{B}(0,R) $ in the direction of $X^N$ to get a surface $\hat{\Sigma}$ in $\mathbb{B}(0,R)$ bounded by $\hat{K}_R$. The linking number of two links $L_1$ and $L_2$ is the number of intersection points of two surfaces, each of them bounded by one of the $L_i$'s; hence $w_\infty(E)$ is the number of intersection points between $\Sigma\cap\mathbb{B}(0,R) $ and  $\hat{\Sigma}\cap\mathbb{B}(0,R)$. \\
If we denote by $Z(X^N)$ the number of zeroes of $X^N$, this means
\begin{lem}\label{nombre de zeros}  If $\Sigma$ is immersed with $D_\Sigma$ transverse double points,
	$w_\infty(\Sigma)=Z(X^N)-2D_\Sigma$.	
\end{lem}
We now investigate $Z(X^N)$.
	 We let $J_+$ be the complex structure (compatible with the metric and orientation) on $N\Sigma$ and we apply Stokes'theorem to the form $\omega$ on $N\Sigma$ \[\omega=-\frac{1}{\|X^N\|^2}<\nabla X^N, JX^N>.\] 
	
Since $d\omega=K^NdA$, where $dA$ is the area element on $\Sigma$, we have
\[\frac{1}{2\pi}\int_{\Sigma\cap B(0,R)}K^N+\frac{1}{2\pi}\int_{\Sigma\cap \partial S(0,R)}\omega=Z(X^N).\]
 As $R$ becomes large, $X^N$ is asymptotic to the constant vector $X_E$ on each end, thus
	\[\lim_{R\longrightarrow\infty}\int_{E\cap \partial S(0,R)}\omega=0\]
\end{proof} 
REMARK. We have similar contructions for branch points ([S-V], [Vi]).\\
\\
The considerations of \S \ref{generalite sur les courbures} together with Theorems \ref{thm: integrale de la courbure tangente} and \ref{proposition:curvature of the normal bundle} imply  
\begin{cor}\label{corollaire: inegalite} Under the assumptions of Theorem \ref{proposition:curvature of the normal bundle} 
	\begin{equation}\label{inegalite entre le writhe et le genre}
	|w_\infty(\Sigma)|\leq\sum_{i=1}^k N_i+\chi(\Sigma)-2
	\end{equation}
The equality is attained in (\ref{inegalite entre le writhe et le genre}) if and only if $\Sigma$ is holomorphic for a parallel complex structure on
$\mathbb{R}^4$.
\end{cor}
REMARK. If $\Sigma$ has a single end, it is a braid of algebraic length $e(K)$ and  (\ref{inegalite entre le writhe et le genre}) becomes
\begin{equation}\label{equation:slice Bennequin}
|e(K)|\leq N-1+2g(K)
\end{equation}
 The  inequality (\ref{equation:slice Bennequin}) is just Rudolph's slice-Bennequin inequality ([Ru]).
\section{Knot types at infinity}
In this section, we discuss the braid at infinity of a generic end. To make sense of the word {\it generic}, we will view the space of ends of a given order $N$ as an algebraic variety.
\subsection{Some knot theory reminders}\label{knot theory reminders}
\subsubsection{The $4$-genus of a knot $K$ in $\mathbb{S}^3$} It is the smallest genus of an oriented surface in $\mathbb{B}^4$ bounded by $K$ and we denote it $g_4(K)$. A knot $K$ is {\it ribbon} if it bounds a minimal disk embedded in $\mathbb{B}^4$ ([Ha]). The main known class of examples of ribbon knots are the {\it symmetric unions}, see [La].
\subsubsection{The signature $sign(K)$ of a knot $K$}  For details, see for example [Mu]. Let $S$ be an oriented surface in $\mathbb{S}^3$ bounded by $K$; if $\alpha$ is a loop on $S$, we let $\hat{\alpha}$ be the loop obtained by pushing $\alpha$ slightly in the direction normal to $S$ in $\mathbb{S}^3$. We derive a bilinear form on $H_1(S,\mathbb{Z})$ given by 
\[I(\alpha,\beta)= lk(\hat{\alpha}, \beta) +lk(\hat{\beta}, \alpha).\]
The signature of $I$ is by definition $sign(K)$ and it verifies
\begin{equation}
sign(K)\leq 2g_4(K)
\end{equation}
\subsubsection{The Lissajous toric knots}
These knots (see for example [S-V2]) appear in the study of branch points of minimal surfaces.
Given $N,p,q$ three mutually prime integers, the knot $K(N,q,p)$ is defined in the cylinder $\mathbb{S}^1\times\mathbb{C}$ by
\begin{equation}\label{expression du noeud de lissajous}
e^{i\theta}\mapsto(e^{Ni\theta}, \sin(q\theta), \cos(p\theta+\eta))
\end{equation}
We recall some of their properties
\begin{fact}\label{facts on lissajous}
\begin{enumerate}
	\item 
	Up to mirror symmetry, the phase $\eta$ in (\ref{expression du noeud de lissajous}) does not change the type of the knot 
	\item The knot is ribbon
	\item 
	It is presented as a braid of the form
	\begin{equation}\label{expression generale de la tresse de lissajous}
	Q\prod_{1\leq 2i+1\leq N} \sigma_{2i+1}^{\pm 1}Q^{-1}\prod_{1\leq 2i\leq N} \sigma_{2i}^{\pm 1}
	\end{equation}
	\item 
	If $p$ and $q$ are of different parites, the algebraic length of the braid (\ref{expression generale de la tresse de lissajous}) is $0$.
	
\end{enumerate}
\end{fact}
\subsection{Generic types of ends}
\begin{thm}\label{theoreme sur les bouts}
	The set of ends of a minimal surface of order $N$ is an infinite dimensional real algebraic variety ${\mathcal E}_N$. 
	\begin{enumerate}
		\item 
		Except in a codimension $1$-subvariety ${\mathcal T}_N$ of ${\mathcal E}_N$, the knot at infinity of an end $E\in {\mathcal E}_N$ is a $(N,N-1)$ torus knot and $|w_\infty(E)|=(N-1)^2$.
		\item 
		If $E$ is an end in ${\mathcal T}_N$, except for a codimension $1$ subvariety ${\mathcal S}_N$ of ${\mathcal T}_N$, we have 
		\begin{enumerate}
			\item 
			if $N$ is odd, the knot at infinity is ribbon and $w_\infty(E)=0$
			\item 
			if $N$ is even, $|w_\infty(E)|=1$ and $|sign(K)|\leq N-1$.
		\end{enumerate}
		
	\end{enumerate}
\end{thm}
\subsection{Canonical form of the end}\label{canonical form of the end}
\begin{defn}An end is in a canonical form if it is parametrized by
	\begin{equation}\label{canonical end}
	z\mapsto(z^N+\overline{f(z)}, g(z)+\overline{h(z)})
	\end{equation}
	where $f$, $g$, $h$ are holomorphic functions which are $o(|z|^N)$.
\end{defn}
\begin{prop}
	Any end can be reparametrized as a canonical end.
\end{prop}
\begin{proof}
	The holomorphic part of the first complex coordinate is of the form (cf. (\ref{condition harmonique}))
	\begin{equation}\label{racine Ne de z}
	z^N+e(z)=z^N\Big(1+\frac{e(z)}{z^N}\Big)=\Big[zR(1+\frac{e(z)}{z^N})\Big]^N
	\end{equation}
	where is $e$ holomorphic, $e(z)=o(|z|^N)$ and $R$ is a holomorphic $N$-th root defined in a neighbourhood of $1$. \\
	By the inverse function theorem, we see that $s(z)=zR\Big(1+\frac{e(z)}{z^N}\Big)$ is a diffeomorphism between neighbourhoods $U_1$ and $U_2$ of $\infty$.\\
	Note that $z=s(z)+{\mathcal O}(1)$ and $z^N+e(z)=s(z)^N$.
Thus we rewrite the holomorphic functions $f,g,h$ in terms of $s$, thus putting the end in a canonical form. 
\end{proof}
We define
${\mathcal E}_N$ as the set of all $(f,g,h)$'s where
\begin{itemize}
	\item 
	$f,g,h$ are meromorphic functions whose analytic parts are polynomials of degrees strictly smaller than $N$.
	\item 
	$Nz^{N-1}f'(z)+g'(z)h'(z)=0$
	\item 
	$
	Res(f)=0\ \ \ \ \ Res(g)=\overline{Res(h)}
	\ \ \ \ \ (\star)$	
\end{itemize}
The condition  $(\star)$ enables us to integrate $(z^N+\bar{f}(z), g(z)+\bar{h}(z))$ and the only logarithmic possible terms are of the form $log(z\bar{z})$.
\subsection{The knot at infinity: proof of Theorem \ref{theoreme sur les bouts}}
We consider an end $(f,g,h)$ in ${\mathcal E}_N$.
We reparametrize the end by a $C^1$ diffeomorphism $w$ between two neighbourhoods of infinity such that
\begin{equation}
w(z)^N=z^N+\overline{f(z)}
\end{equation}
Since $Nz^{N-1}f'(z)+g'(z)h'(z)=0$ and $g$ and $h$ are both ${\mathcal O}(|z|^{N-1})$, it follows that  $f$ is ${\mathcal O}(|z|^{N-2})$ and
\begin{equation}
z(w)=w+{\mathcal O}(\frac{1}{|w|})
\end{equation}
So we can write for some $A,B\in\mathbb{C}$,
\begin{equation}\label{ordre de g et h}\left\{
\begin{array}{c}
g'(z)=Az^{N-1}+{\mathcal O}(|z^{N-1}|)=Aw^{N-1}+{\mathcal O}(|w^{N-1}|)\\ h'(z)=Bz^{N-1}+{\mathcal O}(|z^{N-1}|)=
Bw^{N-1}+{\mathcal O}(|w^{N-1}|)
\end{array}\right.
\end{equation} 
{\bf If $\boldsymbol{|A|\neq |B|}$}, the end becomes  after a linear transformation 
\[(Re(w^{N}), Im(w^{N}),\lambda Re(w^{N-1})+o(|w|^{N-1}), \mu Im(w^{N-1})+o(|w|^{N-1}) ) \]
for $\lambda$ and $\mu$ both real and non zero. Thus the knot is the $(N,N-1)$ torus knot. \\
\\
{\bf We now assume that $\boldsymbol{|A|=|B|}$}. Then, for all $w$'s, the $Aw^N+B\bar{w}^N$'s are on the same real line. Indeed, if $A=|A|e^{i\alpha}$, $B=|A|e^{i\beta}$, $w^N=re^{i\theta}$, 
\[Aw^N+B\bar{w}^N=|A|r^N\cos(\theta+\frac{\alpha-\beta}{2}\big)e^{i\big(\frac{\alpha+\beta}{2}\big)}\]
Thus, after a change of coordinates, we rewrite the end as
$w\mapsto$
\begin{equation}
(w^N,Re(w^{N-1})+Re(aw^{N-2})+ o(|w^{N-2}|), Re(bw^{N-2})+Re(cw^{N-3})+ o(|w^{N-3}|))
\end{equation}
for some complex numbers $a$, $b$, $c$.

	 {\bf 1st case. ${\boldsymbol N}$ is odd}\\
	Then $N,N-1$ and $N-2$ are mutually prime. For a generic $b$, the data of the first terms in each coordinate
	\[w\mapsto (w^N,Re(w^{N-1}), Re(bw^{N-2}))\] give an injective map and so they determine the type of the knot. We recognize a Lissajous toric knot $K(N,N-1,N-2)$ of \S \ref{knot theory reminders}: it is ribbon and has writhe $0$. \\
	
	{\bf 2nd case. ${\boldsymbol N}$ is even}\\
	If $sin[(2N-1)\theta]=0$, $(\theta, \theta+\pi)$ is a singular point of the knot $K(N,N-1,N-2)$ given by (\ref{expression du noeud de lissajous}). So to determine the knot at infinity, we introduce the next term in the $4$-th coordinate of the end, i.e.
	\begin{equation}\label{end for an even order1} 
	(w^N,Re(Aw^{N-1})+{\mathcal O}(|w|^{N-2}),
	Re(\lambda w^{N-2}+\mu w^{N-3})+{\mathcal O}(|w|^{N-4})\big)
	\end{equation}The braid corresponding to (\ref{end for an even order1}) is of the form $e^{i\theta}\mapsto$
	\begin{equation}\label{end for an even order2} 
	(R^Ne^{Ni\theta},\underbrace{R^{N-1}\cos(N-1)\theta)}_{\phi_3(\theta)},
	\underbrace{R^{N-2}\cos (N-2)\theta+R^{N-3}\cos((N-3)\theta+\eta)}_{\phi_4(\theta)}\big)
	\end{equation}
	We now use  constructions of [S-V2] to which we refer the reader for details.\\
	A crossing point of the braid is a triple $(t,k,l)$, $t\in[0,1]$, $k,l\in\mathbb{N}$, $0\leq k<l\leq N-1$ such that 
	\begin{equation}\label{equation pour les crossing points}
	\phi_3\big(\frac{2\pi}{N}(t+k)\big)=\phi_3\big(\frac{2\pi}{N}(t+l)\big)
	\end{equation}
	and the sign of this crossing point the sign of
		\begin{equation}
		\label{difference des derivees}
		\underbrace{\Big[\phi_3'\big(\frac{2\pi}{N}(t+k)\big)-\phi_3'\big(\frac{2\pi}{N}(t+l)\big)\Big]}_{(\star)}
		\underbrace{\Big[
		\phi_4\big(\frac{2\pi}{N}(t+k)\big)-\phi_4\big(\frac{2\pi}{N}(t+l)\big)\Big]}_{(\star\star)}
		\end{equation}
The crossing points given by (\ref{equation pour les crossing points}) are of two types
\begin{enumerate}
	\item $|k-l|\neq\frac{N}{2}$
	These are the non singular points of the singular braid defined in $\mathbb{S}^1\times\mathbb{R}^2$ by 
	\begin{equation}
	S: e^{i\theta}\mapsto (e^{Ni\theta},\sin((N-1)\theta),\sin((N-2)\theta))
	\end{equation}
	The orientation reversing map $T:(x,y,z,t)\mapsto (x,y,-z,t)$
	maps a singular (resp. regular) crossing point of $S$ to a 
	singular (resp. regular) crossing point of $S$; moreover $T(S(e^{i\theta}))=T(S(e^{i(\theta+\pi)}))$.\\
	Hence the number of regular points of $S$ counted with sign is $0$.
	\item 
	$|k-l|=\frac{N}{2}$\\
	Similarly to 1. of Fact \ref{facts on lissajous}, the braid is isotopic up to mirror symmetry to
	\[\beta:e^{i\theta}\mapsto\Big(e^{iN\theta}, \sin[(N-1)\theta], \sin[(N-2)\theta+\psi]+\frac{1}{|w|}\sin[(N-3)\theta]\Big)\]
	for some phase $\psi$.\\
	In the factor $(\star\star)$ of (\ref{difference des derivees}) the terms in $\cos (N-2)\theta$ cancel out so the sign of the crossing point is the same as it is for  the braid
	\begin{equation}\label{points singuliers} 
	e^{i\theta}\mapsto \Big(e^{Ni\theta},\cos[(N-1)\theta],
	\cos[(N-3)\theta+\eta]\Big)
	\end{equation}
	for some generic phase $\eta$ ensuring that (\ref{points singuliers}) is non singular. We recognize the Lissajous toric braid $(N,N-1,N-3)$. The crossing points for $|k-l|=\frac{N}{2}$ correspond to the braid generator $\sigma_{\frac{N}{2}}^{\pm 1}$ in the expression (\ref{expression generale de la tresse de lissajous}) of the braid; hence the sum of the exponents of $\sigma_{\frac{N}{2}}$  is $\pm 1$. Hence the writhe of $\beta$ is $\pm 1$.\\
	After a change of phase, $\beta$ is of the form (cf. (\ref{expression generale de la tresse de lissajous}))
	\begin{equation}\label{expression generale de la tresse de lissajous}
		A\prod_{1\leq 2i+1\leq N} \sigma_{2i+1}^{\pm 1}B\prod_{1\leq 2i\leq N} \sigma_{2i}^{\pm 1}
		\end{equation}
	where $B$ differs from $A^{-1}$ only by the sign of the 
	 $\sigma_{\frac{N}{2}}^{\pm}$'s. Since $\sigma_{\frac{N}{2}}^{\pm}$'s appears $\frac{N-1}{2}$ times in $B$, it is enough to change the sign of at most  $\frac{N-1}{2}$ copies of $\sigma_{\frac{N}{2}}^{\pm}$ to transform $B$ into $A^{-1}$. By doing this we transform $\beta$ into a ribbon braid $\hat{\beta}$ which is a symmetric union, hence represents a ribbon knot and has zero signature. We conclude by using the 
	\begin{thm}	(see for example [Cr]) If two knots $K_+$ and $K_-$ have diagrams which differ only by the sign of a single crossing, then
		\[|sign(K_+)-sign(K_-)|\leq 2\]
	\end{thm}
\end{enumerate}
\subsection{When is a PMFTC surface embedded in $\mathbb{R}^4$?}
The knot and of the writhe at infinity can sometimes help determine if a PMFTC surface in $\mathbb{R}^4$ is embedded. \\
If a PMFTC mapping $F$ of $\mathbb{C}$ in $\mathbb{R}^4$ has an end of the form $(z^N+o(|z^N|),z^q+o(|z^q|))$, its knot at infinity is the $(N,q)$ torus knot which has $4$-genus $\frac{(N-1)(q-1)}{2}$ ([K-M]); so $F(\mathbb{C})$ is not embedded. \\
By contrast,  if the knot at infinity is ribbon as in Theorem \ref{theoreme sur les bouts} 2. a), it is not an obstruction for $F(\mathbb{C})$ to be embedded. But of course it is also no guarantee that it will be embedded.\\
See also Proposition \ref{courbure moins six pi} below where  the writhe at infinity is an obstruction for embeddedness in a class of surfaces with two ends. 
\section{Minimal surfaces of small total curvature}
\subsection{Embedded minimal surfaces of total curvature $-4\pi$}
[H-O1]  show that if $F:\Sigma\longrightarrow\mathbb{R}^4$ is minimal of  total curvature is $-4\pi$, $\Sigma$  is either $\mathbb{C}$ or $\mathbb{C}\backslash \{0\}$; in both cases, they give a general formula for the coordinates of $F$ (Propositions 6.4 and 6.6). We take this one step further by investigating when $\Sigma$ is embedded.\\
NB. By {\it holomorphic}, we mean holomorphic for some parallel complex structure $J$ on $\mathbb{R}^4$.
\begin{prop}\label{fourd Enneper}
A non holomorphic minimal embedding $F$ from $\mathbb{C}$ and of total curvature $-4\pi$ can be written as
\begin{equation}\label{formule pour F}
F:z\mapsto(\frac{z^3}{3}-a^2z-\bar{\beta}^2\bar{z},\beta\frac{z^2}{2}+\bar{\beta}\frac{\bar{z}^2}{2}
+\beta a z-\bar{\beta} \bar{a} \bar{z})
\end{equation}
for $a,\beta$ non zero complex numbers with
\begin{equation}\label{condition pour etre injective}
\frac{\bar{\beta}}{\beta}\neq\frac{a^2}{\bar{a}^2}
\end{equation}
The knot at infinity is trivial and $d_+=d_-=1$. 
\end{prop} 
REMARK. If $F$ is of the form (\ref{formule pour F}) without verifying (\ref{condition pour etre injective}), then $F(\mathbb{C})$ has codimension one self-intersections. If we take $a=0$ in formula (\ref{formule pour F}), we have the Enneper surface in $\mathbb{R}^3$ so the surfaces given by (\ref{formule pour F}) can be seen as $4$-dimensional desingularizations of the $3D$ Enneper surface.
\begin{proof}
We look for polynomials $e',f',g',h'$ of degree $0$, $1$ or $2$ which verify $e'f'+g'h'=0$. Without loss of generality we assume 
\begin{equation}\label{formule pour les premieres composantes}
e'=(z-a)(z+a)\ \ \ \ \ f'=\lambda  \ \ \ \ g'=\alpha(z+a)\ \ \ \ h'=\beta(z-a)
\end{equation}
for some $a,\lambda,\alpha,\beta\in\mathbb{C}$.\\
We have $\alpha\beta+\lambda=0$; since the knot at infinity is not a torus knot, this means that  $|\alpha|=|\beta|=\sqrt{|\lambda|}$. We let $\alpha=Re^{i\gamma_1}$, $\beta=Re^{i\gamma_2}$,  multiply the second coordinate in $\mathbb{C}^2$ by $e^{i(\frac{\gamma_2-\gamma_1}{2})}$, assume that 
\begin{equation}\label{conjugues}
\beta=\alpha\ \ \ \ \ \ \ \ \ \ \lambda=-\beta^2
\end{equation}
and derive (\ref{formule pour F}). If $a=0$, the surface is the Enneper surface in $\mathbb{R}^3$ and it has self-intersections. Hence we assume $a\neq 0$.\\ 
We now let $z_1,z_2$ be two different numbers such that 
\begin{equation}\label{egalite pour z1 et z2}
F(z_1)=F(z_2)
\end{equation}
 \[\mbox{We introduce}\ \ \ \ \ \ \ \ X=z_1-z_2\neq 0, \ \ \ Y=z_1+z_2\]
and rewrite the second component in $\mathbb{C}^2$ of (\ref{egalite pour z1 et z2})
\begin{equation}\label{egalite dans la deuxieme composante}
\frac{1}{2}\beta XY+\frac{1}{2}\bar{\beta}\bar{X}\bar{Y}+a\beta X-\bar{a}\bar{\beta}\bar{X}=0
\end{equation}
Separating the real and imaginary part of  of (\ref{egalite dans la deuxieme composante}), we get
\begin{equation}\label{egalite dans la deuxieme composante1}
a\beta X-\bar{a}\bar{\beta}\bar{X}=0\ \ \ \ \ \mbox{and}\ \ \ \ \  \beta XY+\bar{\beta}\bar{X}\bar{Y}=0
\end{equation}
\begin{equation}\label{equation pour Y}
\mbox{hence}\ \ \ \ \ \  X^2=\frac{\bar{a}\bar{\beta}}{a\beta}|X|^2\ \ \ \ \ \ \ \  Y^2=-\frac{a}{\bar{a}}|Y|^2
\end{equation}
We rewrite the first component of $F(z_1)=F(z_2)$ in terms of $X$ and $Y$
\begin{equation}\label{egalite des first component}
\frac{X}{12}(3Y^2+X^2)-a^2X-\bar{\beta}^2\bar{X}=0.
\end{equation}
We plug (\ref{egalite dans la deuxieme composante1}) into (\ref{egalite des first component}), simplify by $X$ and derive
\begin{equation}\label{reecriture du premier terme}
\frac{1}{12}(3Y^2+X^2)-a^2-\frac{a}{\bar{a}}|\beta|^2=0
\end{equation} 
We let $a=|a|e^{iu}$ and rewrite (\ref{reecriture du premier terme}) using (\ref{equation pour Y})
\begin{equation}\label{equation quasi finale}
\frac{1}{12}(-3e^{2iu}|Y|^2+\frac{\bar{a}\bar{\beta}}{a\beta}|X|^2)-|a|^2e^{2iu}-e^{2iu}|\beta|^2=0
\end{equation}
The equation (\ref{equation quasi finale}) has a solution if an only if
\begin{equation}\label{condition pour ne pas etre injective}
\frac{\bar{a}\bar{\beta}}{a\beta}=e^{2iu}=\frac{a}{\bar{a}}
\end{equation}
We recognize  (\ref{condition pour etre injective}).
Conversely, if (\ref{condition pour etre injective}) is verified, we rewrite (\ref{equation quasi finale}) as
\begin{equation}\label{equation finale}
-3|Y|^2+|X|^2=12(|a|^2+|\beta|^2)
\end{equation}
Thus $|X|$ and $|Y|$ belong to a hyperbola ${\mathcal H}$ in $\mathbb{R}^2$: for every point in ${\mathcal H}$, we get four values of the type $(X,Y)$, $(-X,Y)$, $(X,-Y)$ and $(-X,-Y)$. They correspond in turn to two 
double points of $F$ (which are different except if $Y=0$).
\end{proof}

\begin{exple}
The image of the map 
$\mathbb{C}\longrightarrow \mathbb{R}^4$
\[z\mapsto (z+\bar{z}^3,z^2+\frac{3}{4}\bar{z}^2)\]
is an immersed minimal surface of total curvature $-4\pi$ and two transverse double points. It is not holomorphic for any parallel complex structure and its knot at infinity is the $(2,3)$ torus knot.
\end{exple} 
\begin{prop}	
Every non holomorphic minimal immersion of total curvature $-4\pi$ from $\mathbb{C}\backslash\{0\}$ is an embedding. 
Such a map can always be written as
\begin{equation}\label{surface minimale sur le plan epointe}
F:z\mapsto (z+\frac{c^2}{b^2z}+\frac{b^2}{\bar{z}},bln(z\bar{z})+\frac{c}{z}-\frac{\bar{c}}{\bar{z}})
\end{equation}
where $b\in\mathbb{R}\backslash\{0\}, c\in\mathbb{C}$.
\end{prop}
\begin{proof}
	We look for $e',f',g',h'$  which verify $e'f'+g'h'=0$
	and are meromorphic with a single common pole, which we assume to be $0$. After a change of coordinates, we take
	\begin{equation}
	e'=1+\frac{b_1}{z}+\frac{c_1}{z^2},\ \ f'=a+\frac{b_2}{z}+\frac{c_2}{z^2}\ \ g'=\frac{b_3}{z}+\frac{c_3}{z^2},\ \ \
	h'=\frac{b_4}{z}+\frac{c_4}{z^2}
	\end{equation} 
	The residues at $0$ have to verify $Res(f)=0$ and  
	$Res(g)=\overline{Res(h)}$, thus
 \[b_1=\overline{b_2},\ \ \ \ \ \ \ \ \ b_3=\overline{b_4}.\] After a change of coordinates, we assume $b_3$ to be real and we let
\begin{equation}
b_3=b_4=b\in\mathbb{R}
\end{equation}
The coefficients of  $1,\frac{1}{z},\frac{1}{z^2},\frac{1}{z^3},\frac{1}{z^4}$ in the equation $e'f'+g'h'=0$ yield
	\[a=0,\ \ b_2=0=b_1,\ \ c_2+b^2=0,\ \ b(c_3+c_4)=0,\ \  c_1c_2+c_3c_4=0\]
If $b=0$, $c_2=0$ and $c_3$ or $c_4$ is zero, so the surface is complex for some complex structure. Hence $b\neq 0$ and $c_3=-c_4$. Putting
$c=c_4$, we derive the expression (\ref{surface minimale sur le plan epointe}). \\
Now let $z_1$ and $z_2$ be two complex numbers, $z_1\neq z_2$ such that 
\begin{equation}\label{points double avec un trou}
F(z_1)=F(z_2)
\end{equation}
The third coordinate of (\ref{surface minimale sur le plan epointe}) tells us that 
\begin{equation}\label{egalite des modules}
|z_1|=|z_2|
\end{equation}
\[\mbox{We let}\ \ \ \ \ \ c=|c|e^{-i\gamma}, \ \ \ \ w_1=z_1e^{i\gamma}
 \ \ \ \ w_2=z_2e^{i\gamma}\]
so the third component of (\ref{points double avec un trou}) becomes $Im(w_1)=Im(w_2)$.\\
Putting this together with (\ref{egalite des modules}), we derive
\begin{equation}\label{relation entre les w}
w_1=-\bar{w}_2
\end{equation} 
We plug (\ref{relation entre les w}) in the first complex coordinate of (\ref{points double avec un trou}) and derive
\begin{equation}
e^{i\gamma}(w_1+\bar{w}_1)[1+\frac{|c|}{|w|^2}+\frac{b^2}{|w|^2}]=0
\end{equation}
Since  $w_1+\bar{w}_1=w_1-w_2=2e^{i\gamma}(z_1-z_2)\neq 0$, we have a contradiction.
\end{proof}
For larger total curvature, we only get a couple of partial results which we present now.
\subsection{Total curvature $-6\pi$}
\begin{prop}\label{courbure moins six pi}
Let $F:\mathbb{C}\backslash\{0\}\longrightarrow\mathbb{R}^4$ be a minimal surface of total curvature $-6\pi$. If $F$ is embedded and not holomorphic, the two tangent planes at infinity are not transverse.
\end{prop}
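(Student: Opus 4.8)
The plan is to argue by contradiction: assume $F$ is an embedding, not holomorphic, with $P_1$ and $P_2$ transverse, and show that the normal–curvature identity of Corollary \ref{plus de formules pour le fibre normal} cannot hold. First I would pin down the two end orders. Since $\Sigma=\mathbb{C}\setminus\{0\}$ has genus $0$ and exactly two ends, Proposition \ref{courbure totale du fibre tangent} gives $\frac{1}{2\pi}\int_{\Sigma}K^T=-(N_1+N_2)$; total curvature $-6\pi$ forces $N_1+N_2=3$, so after relabelling $N_1=1$ and $N_2=2$. For the normal side, non-holomorphicity means that neither $\gamma_+$ nor $\gamma_-$ is constant, i.e. $d_+\ge 1$ and $d_-\ge 1$; together with $d_++d_-=3$ from (\ref{equation:the two curvatures}) this gives $\{d_+,d_-\}=\{1,2\}$ and hence $\frac{1}{2\pi}\int_{\Sigma}K^N=d_+-d_-=\pm 1$.

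Next I would describe the two knots at infinity. By Proposition \ref{parametrization du end}, the end of order $N_1=1$ is an unknotted circle carried by $P_1$, so $K_1$ is the unknot and, as a one–strand braid, $e(K_1)=0$. For $K_2$ the decisive input is the preceding proposition: since $F$ is an embedded minimal cylinder, $K_1$ and $K_2$ are concordant, and being concordant to the unknot makes $K_2$ a slice knot. Rudolph's slice–Bennequin inequality [Ru], applied to the $N_2$–strand braid representing the slice knot $K_2$ (and to its mirror, which is also slice), then bounds the algebraic length on both sides, giving $|e(K_2)|\le N_2-1=1$.

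I would then feed these data into the transverse–ends formula (\ref{equation:normal bundle with several ends}). With two ends the doubled sum over the ordered pairs $(1,2)$ and $(2,1)$ reduces, using $N_1N_2=2$ and $\sigma(1,2)=\sigma(2,1)$, to $8\,\sigma(1,2)\,lk(K_1,K_2)$, so the identity reads
\[
\pm 1 = e(K_1)+e(K_2)+8\,\sigma(1,2)\,lk(K_1,K_2)=e(K_2)+8\,\sigma(1,2)\,lk(K_1,K_2).
\]
Because $|e(K_2)|\le 1$, the right–hand side differs from a multiple of $8$ by at most $1$, so the equality can only be reached if $lk(K_1,K_2)=0$. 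But transversality of $P_1$ and $P_2$ is exactly what makes $lk(K_1,K_2)\neq 0$: two transverse oriented $2$–planes meet a large sphere $\mathbb{S}(0,R)$ in two Hopf–linked great circles (linking number $\sigma(1,2)=\pm1$), and since $K_i$ winds $N_i$ times about the corresponding circle one gets $lk(K_1,K_2)=N_1N_2\,\sigma(1,2)=\pm 2\neq 0$. This contradiction shows that $P_1$ and $P_2$ cannot be transverse.

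The main obstacle is the interplay of the two independent geometric inputs that make the arithmetic close. On one side one must be certain that $K_2$ is genuinely slice and that the slice–Bennequin bound holds in both directions — this is precisely where the concordance proposition and the genus–zero filling are essential, and where a careless one–sided estimate would fail. On the other side one must verify, from the asymptotic parametrisation of Proposition \ref{parametrization du end}, that transversality forces the ends to be non–trivially linked (the Hopf–linking computation together with the winding count $N_i$). Pinning down the exact coefficient and sign conventions in the linking term is the most delicate bookkeeping, but the argument is robust: any nonzero value of $lk(K_1,K_2)$ already pushes the multiple of $8$ out of reach of $\pm1$ under the constraint $|e(K_2)|\le 1$.
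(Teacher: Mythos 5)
Your proposal is correct and follows essentially the same route as the paper: from $d_++d_-=3$ and non-holomorphicity one gets $|d_+-d_-|=1$, the end orders are $\{1,2\}$ so $K_1$ is the unknot, sliceness of $K_2$ (via the concordance proposition) forces $|e(K_2)|\le 1$, and the linking term in Corollary \ref{plus de formules pour le fibre normal} produced by transversality then makes the normal-curvature identity impossible. The only minor differences are that the paper bounds $e(K_2)$ by enumerating the $2$-strand braids $\sigma^k$ (whose closures are slice knots only for $k=\pm1$) instead of invoking slice--Bennequin on $K_2$ and its mirror, and it evaluates the linking contribution as $\pm4$ rather than your $\pm16$ --- a bookkeeping ambiguity in the conventions of the corollary's formula that is immaterial, since either value is too large to be compatible with $|d_+-d_-|=1$ and $|e(K_2)|\le 1$.
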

\begin{proof}
We have $d_++d_-=3$. Since $F$ is not holomorphic neither of $d_+$ or $d_-$ is zero, thus
\begin{equation}\label{difference entre les d}
|d_+-d_-|=1
\end{equation}
We denote by $K_1$ (resp. $K_2$) the knot at infinity in the neighbourhood of $0$ (resp. infinity). Without loss of generality, we assume that $F$ is equivalent to $z^2$ (resp. $\frac{1}{z}$) near infinity (resp. near $0$). It follows that $K_1$ is trivial and $K_2$ is a knot represented by a braid with $2$ strings. This braid is a $\sigma_1^k$ for some integer $k\neq 0$; if $|k|>1$, then $K_2$ is a torus knot and if $k=\pm 1$, then $K_2$ is trivial. But the knots $K_1$ and $K_2$ are concordant, hence $K_2$ is slice: thus it cannot be a torus knot and $e(K_2)=\pm 1$.\\
If the two tangent plane intersect transversally, Corollary \ref{corollaire: inegalite} yields
\[|d_+-d_-|=|e(K_1)+e(K_2)\pm 4|=|\pm 1 \pm 1\pm 4|\leq 2+1+0-2=1\]
which is impossible.
\end{proof}
\begin{exple}\label{exemple d'une surface non plongee de courbure totale six et a deux bouts}
The following map from $\mathbb{C}\backslash\{0\}$ to $\mathbb{R}^4$ is minimal not embedded
\begin{equation}
z\mapsto (z^2+\ln z+\ln \bar{z},2z-\bar{z}+\frac{1}{2\bar{z}})
\end{equation}
\end{exple}
\subsection{Total curvature $-8\pi$}
\begin{prop}
	The following is a minimal embedding of $\mathbb{C}$ in $\mathbb{R}^4$ of total curvature $-8\pi$. It is preserved by the symmetries w.r.t. the planes generated by $(e_1,e_3)$ and $(e_2,e_4)$.
	\begin{equation}\label{formule pour F1}
	F:z=u+iv\mapsto(\frac{z^5}{5}-z+\bar{z},-\frac{z^3}{3}+\frac{\bar{z}^3}{3}+z+\bar{z})
	\end{equation}
	\[ 	= \left( \begin{array}{c}
	\frac{u^5}{5}-2u^3v^2+uv^4 \\
	\frac{v^5}{5}+u^4v-2u^2v^3-2v\\
	2u\\
	\frac{2v^3}{3}-2u^2v \end{array} \right).\] 
	\end{prop}
\begin{proof}
	The map comes from the $4$-tuple of holomorphic functions
	\[(e',f',g',h')=(z^4-1,1,-z^2+1,z^2+1).\]
Suppose that $F(u_1,v_1)=F(u_2,v_2)$ with $(u_1,v_1)\neq(u_2,v_2)$. We have
\begin{equation}
u_1=u_2=u
\end{equation}
Then $v_1\neq v_2$. Plug this in the $4$-th component and get
\begin{equation}\label{courbure huit1}
v_1^2+v_2^2+v_1v_2=3u^2
\end{equation}
Thus $u\neq 0$, otherwise we would have $v_1=v_2=0$. After simplification, the first component yields
\[
(v_1+v_2)\underbrace{(-2u^2+v_1^2+v_2^2)}_{=\frac{1}{3}(v_1-v_2)^2\ \mbox{using}\ (\ref{courbure huit1})} =0
\]
Thus the double point is of the form $(u,\sqrt{3}u),(u,-\sqrt{3}u)$ 
We plug it into the second component and get $u=0$,  contradiction.
\end{proof}

\footnotesize{Marc Soret: Universit\'e F. Rabelais, D\'ep. de Math\'ematiques, 37000 Tours, France,\\
	Marc.Soret@lmpt.univ-tours.fr\\
\footnotesize{Marina Ville, Univ. Paris Est Creteil, CNRS, LAMA, F-94010 Creteil, France, 
	villemarina@yahoo.fr}

\begin{thebibliography}{ch-os}
	\bibitem[A-S-V]{asv} A. Aiolfi, M. Soret, M. Ville, 'On the Size of Minimal Surfaces in $\mathbb{R}^4$', arXiv:2106.06318
\bibitem[C-O]{ch-os} S.-S. Chern, R. Osserman, 'Complete minimal surfaces in Euclidean $n$-space', {\it Jour. d'Analyse Math\'ematique}, 19 (1967) 15-34.
\bibitem[Cr]{cr} P. Cromwell, 'Knots and links', {\it Cambridge University Press}, 2004
\bibitem[E-S]{e-s}J. Eells, S. Salamon, 'Twistorial construction of harmonic maps of surfaces into four-manifolds', {\it  Ann. della Scuol. Norm. di Pisa}
  12 (4) (1985) 589-640.
  \bibitem[Ha]{ha} J. Hass, 'The geometry of the slice-ribbon problem', {\it  Math. Proc. of the Cambridge Philosophical Society}, 94(1) (1983), 101-108
\bibitem[H-O1]{ho-os1} D. Hoffman, R. Osserman, 'The geometry of the generalized Gauss map', {\it Memoirs of the A.M.S.}, Vol. 28, Number 236, 1980.
\bibitem[H-O2]{ho-os2} D. Hoffman, R. Osserman, 'The Gauss map of surfaces in $\mathbb{R}^n$', {\it  J. Diff. Geom.}, 18 (1983), 733-754.
\bibitem[La]{la} C. Lamm, 'Symmetric unions and ribbon knots', {\it Osaka J. Math.} 37(3), (2000) 537-550
\bibitem[J-M]{j-m}L. P. Jorge, W. H. Meeks III, 'The topology of complete minimal surfaces of finite total Gaussian curvature', {\it Topology}, Volume 22(2) (1983), 203-221
\bibitem[K-M]{km} P. Kronheimer, T. Mrowka, 'Gauge theory for embedded surfaces, I', {\it Topology} 32 (1993), 773?826
\bibitem[M-O]{mo-os} X. Mo, R. Osserman, 'On the Gauss map and total curvature of complete minimal surfaces and an extension of Fujimoto's theorem', {\it J. Diff. Geom.} 31 (1990) 343-355.
\bibitem[Mu]{mu}	K. Murasugi, 'On a certain numerical invariant of link types', {\it Trans. Amer. Math.
Soc.} 117 (1965), 387-422.
\bibitem[N-R]{n-r} W. Neumann, L. Rudolph, 'Unfoldings in knot theory', {\it Math. Ann.} 278, 409-439 (1987).
\bibitem[Ru]{ru} L. Rudolph, 'Quasipositivity as an obstruction to sliceness', {\it Bull. of the
A.M.S.} 29(1), (1993) 51-59.
\bibitem[Shi]{shi} K. Shiohama,  'Total curvature and minimal maps of complete open surfaces', 
{\it Proc. of the A.M.S.} 94(2), 1985, 310-316.
\bibitem[S-V1]{sv} M. Soret, M. Ville 'Singularity knots of minimal surfaces in $\mathbb{R}^4$', {\it J. Knot Theory Ramifications} 20(04) (2011)
\bibitem[S-V2]{sv2} M. Soret and M. Ville 'Lissajous-toric knots', {\it J. Knot Theory Ramifications} 29, 2050003 (2020).
\bibitem[Vi]{vi2} M. Ville  'Branched immersions and braids', {\it Geom. Dedicata} 140, 145-162 (2009).
\end{thebibliography}
\end{document}